\newtheorem{thm}{Theorem}
\newtheorem{prop}[thm]{Proposition}
\newtheorem{ex}[thm]{Example}
\newtheorem{cor}[thm]{Corollary}
\renewcommand{\Re}{\mathbb R}
\newcommand{\Ze}{\mathbb Z}
\newcommand{\Ren}{\Re^n}
\newcommand{\N}{\mathbb N}
\newcommand{\FF}{\mathcal F}
\newcommand{\st}{\colon}  % "such that" in set exprs like { ... : ... }
\newcommand{\nlogn}{n\log n+ \log\log n + 5n}
\DeclareMathOperator{\card}{card}
\DeclareMathOperator{\vcdim}{vcdim}
\DeclareMathOperator{\vol}{vol}
\DeclareMathOperator{\cl}{cl}
\DeclareMathOperator{\conv}{conv}
\begin{document}
\title[Transversal Number and VC-Dimension of a Family of Homothets]{On the Transversal Number and VC-Dimension
    of Families of Positive Homothets of a Convex Body}

\author{M\'arton Nasz\'odi}
\author{Steven Taschuk}
\address{Dept.\ of Math.\ and Stats., 632 Central Academic Building,
    University of Alberta, Edmonton, AB, Canada T6G 2G1}
\email{mnaszodi@math.ualberta.ca, staschuk@ualberta.ca}
\thanks{The first named author holds a Postdoctoral Fellowship
    of the Pacific Institute for the Mathematical Sciences
    at the University of Alberta.
    The second naemd author was supported
    by an Alexander Graham Bell Canada Graduate Scholarship
    of the Natural Sciences and Engineering Research Council of Canada.}

\subjclass[2000]{Primary 52A35, Secondary 05D15}
\keywords{transversal number, translates of a convex set,
    homothets, VC-dimension}
\date{2009 July 29}

\begin{abstract}
Let $\FF$ be a family of positive homothets (or translates) of a given convex body~$K$ in $\Ren$.
We investigate two approaches to measuring the complexity of $\FF$. First, we find an upper bound on
the transversal number $\tau(\FF)$ of $\FF$ in terms of $n$
and the independence number~$\nu(\FF)$. This question is motivated by a problem of Gr\"unbaum \cite{DGrK63}.
Our bound $\tau(\FF) \leq2^n\binom{2n}{n}(\nlogn)\nu(\FF)$
is exponential in $n$, 
an improvement from the previously known bound of
Kim, Nakprasit, Pelsmajer and Skokan \cite{KNPS}, which was of order $n^n$.
By a lower bound, we show that the right order of magnitude is exponential in $n$.

Next, we consider another measure of complexity,
the Vapnik--\v{C}ervonenkis dimension of $\FF$. 
We prove that~$\vcdim(\FF)\le 3$ if $n=2$
and is infinite for some $\FF$ if $n\geq 3$. This settles a conjecture of G\"unbaum \cite{Gr75}: 
Show that the maximum dual VC-dimension of a family of positive homothets of a given convex body $K$ in 
$\Re^n$ is $n+1$. This conjecture was disproved by Naiman and Wynn \cite{NW93}
who constructed a counterexample of dual VC-dimension $\left\lfloor\frac{3n}{2}\right\rfloor$.
Our result implies that no upper bound exists.
\end{abstract}

\maketitle

\section{Definitions and Results}

A \emph{convex body} in~$\Ren$ is a compact convex set with non-empty interior.
A \emph{positive homothet} of a set~$S\subseteq\Ren$
is a set of the form~$\lambda S+x$, where~$\lambda>0$ and~$x\in\Ren$. 
The cardinality, closure, convex hull and volume of $S$ are denoted as
$\card(S), \cl(S),\conv(S)$ and $\vol(S)$, respectively.
The origin of~$\Ren$ is denoted~$o$.

Let $\FF$ be a family of positive homothets (or translates) of a given convex body~$K$ in $\Ren$.
In this note we study two approaches to measuring the complexity of $\FF$.

First, we bound the transversal number $\tau(\FF)$ in terms of the dimension~$n$
and the independence number~$\nu(\FF)$.
The \emph{transversal number}~$\tau(\FF)$ of a family of sets~$\FF$ is defined as
\[ \tau(\FF) = \min\ \{\card(S) \st
    \text{$S\cap F\ne\emptyset$ for all $F\in\FF$}\} . \]
The \emph{independence number}~$\nu(\FF)$ of~$\FF$ is defined as
\[ \nu(\FF) = \max\ \{\card(S) \st
    \text{$S\subseteq\FF$ and $S$ is pairwise disjoint}\} . \]
Clearly~$\nu(\FF)\le\tau(\FF)$. The problem of finding an inequality in the reverse direction originates
in the following question of Gr\"unbaum \cite{DGrK63}: Is it true that $\nu(\FF)=1$ implies $\tau(\FF)\leq 3$
for any family~$\FF$ of translates of a convex body in~$\Re^2$?
Karasev \cite{Kar00} proved the affirmative answer.
One of the main results of \cite{KNPS} by Kim, Nakprasit, Pelsmajer and Skokan
is that in~$\Ren$ we have~$\tau(\FF) \leq 2^{n-1}n^n\nu(\FF)$.
We improve the dependence on~$n$ to exponential.

\begin{thm}\label{thm:ubound}
	Let $K\subseteq\Ren$ be a convex body and
	$\FF$ a family of positive homothets of~$K$.
	Then
	\begin{multline*}
		 \nu(\FF)\leq\tau(\FF)\leq\frac{\vol(2K-K)}{\vol(K)}(\nlogn)\nu(\FF)
		 \\
		 \le\begin{cases}
		     3^n(\nlogn)\nu(\FF) & \text{if $K=-K$,} \\
		     2^n\binom{2n}{n}(\nlogn)\nu(\FF) & \text{otherwise.}
		 \end{cases}
	\end{multline*}
\end{thm}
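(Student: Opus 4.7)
My approach would split the bound into a volumetric/fractional step and an integer rounding step, closing with Rogers--Shephard.

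The geometric heart of the fractional step is a local packing lemma: if $F = \lambda' K + y'$ and $G = \lambda K + y$ are homothets with $F\cap G\neq\emptyset$ and $\lambda'\le\lambda$, then $F\subseteq y + \lambda(2K-K)$. In particular, any pairwise disjoint family of such $F$'s has total volume at most $\lambda^n\vol(2K-K)$. I would select a maximum independent subfamily $\{G_1,\ldots,G_{\nu(\FF)}\}$ greedily by \emph{decreasing} size, so that every $F\in\FF$ can be charged to some $G_j$ that meets it and is no smaller than $F$. Combined with LP duality between $\ftr$ and the fractional matching number, this packing estimate yields the fractional inequality
\[
\ftr(\FF) \le \frac{\vol(2K-K)}{\vol(K)}\,\nu(\FF).
\]

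Next I would round the fractional transversal to an integer one, paying the factor $\nlogn$. By a routine compactness argument it suffices to treat finite~$\FF$. The tool is randomized rounding: sample $\sim\nlogn$ points from an optimal fractional transversal and apply a union bound to conclude that every $F\in\FF$ is hit with positive probability. The $n\log n$ growth rate plausibly reflects $\log$ of a $(C/\varepsilon)^n$-size $\varepsilon$-net of ``essentially distinct'' homothets left after discretizing centres and scales. I expect this rounding step --- and especially the careful calibration of constants needed to pin down precisely $n\log n+\log\log n+5n$ --- to be the main obstacle.

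Finally, the explicit bounds on $\vol(2K-K)/\vol(K)$ follow from convex geometry. When $K=-K$ one has $2K-K=3K$ and the ratio equals $3^n$. In general, translating so that $o\in K$ gives $K\subseteq K-K$, hence
\[
2K-K \;=\; K+K-K \;\subseteq\; (K-K)+(K-K) \;=\; 2(K-K),
\]
and the Rogers--Shephard inequality $\vol(K-K)\le\binom{2n}{n}\vol(K)$ yields the bound $\vol(2K-K)\le 2^n\binom{2n}{n}\vol(K)$, as claimed.
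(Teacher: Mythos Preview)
Your plan diverges from the paper's route and, as written, has two genuine gaps.

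\textbf{The fractional step does not go through for varying scales.} Your packing lemma is correct: if $F=\lambda'K+y'$ meets $G=\lambda K+y$ with $\lambda'\le\lambda$, then $F\subseteq y+\lambda(2K-K)$. For \emph{translates} this does yield $\ftr\le\frac{\vol(2K-K)}{\vol(K)}\nu$, via the uniform measure of density $1/\vol(K)$ on each $y_j+\lambda_j(2K-K)$. But for homothets the small members $F$ have $\vol(F)=(\lambda')^n\vol(K)\ll\lambda_j^n\vol(K)$, so that measure only gives $\mu(F)<1$; dually, integrating the fractional-matching constraint over $y_j+\lambda_j(2K-K)$ yields $\sum_{F\in\FF_j} w(F)(\lambda_F/\lambda_j)^n\le\vol(2K-K)/\vol(K)$, which does not bound $\sum w(F)$. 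The paper fixes this not by LP duality but by \emph{shrinking}: it chooses the greedy independent family by \emph{smallest} scale first, then replaces each $F\in\FF_i$ by a translate of $\lambda_iK$ inside $F$ that still meets $F_i$, reducing to the translate case.

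\textbf{The rounding step cannot be done by a union bound or an $\varepsilon$-net argument.} You suggest sampling from the fractional transversal and discretizing $\FF$ to an $\varepsilon$-net of size $(C/\varepsilon)^n$. But the paper's own Example~\ref{ex:vcdim} constructs, for $n\ge 3$, a family of \emph{translates} of a convex body with infinite VC-dimension; hence no dimension-only bound on the number of ``essentially distinct'' members, and no Haussler--Welzl-type rounding, is available. The constant $\nlogn$ is not a rounding artifact to be calibrated: it is exactly Rogers' lattice-covering density, entering through the Rogers--Zong inequality
\[
N(K_i-K,-K)\ \le\ \frac{\vol(K_i-K+K)}{\vol(-K)}\,(\nlogn)\ =\ \frac{\vol(2K-K)}{\vol(K)}\,(\nlogn),
\]
which the paper invokes as a black box. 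There is no separate fractional/integer split: once you observe that a transversal of $\FF_i$ is the same as a covering of $K_i-K$ by translates of $-K$, Rogers--Zong gives the integer transversal directly. Your final step, bounding $\vol(2K-K)/\vol(K)$ via $2K-K\subseteq 2(K-K)$ and Rogers--Shephard, is correct and matches the paper.
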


The following proposition
shows that an exponential bound is the best possible,
even when~$\FF$ contains only translates of~$K$.

\begin{prop}\label{prop:lbound}
	For sufficiently large~$n$, there is a convex body~$K$ in~$\Ren$
	and a family~$\FF$ of translates of $K$
	such that $\tau(\FF)\geq\frac12(1.058)^n\nu(\FF)$.
\end{prop}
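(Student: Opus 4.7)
The plan is to exhibit an explicit body $K$ and family $\FF$ for which the ratio $\tau(\FF)/\nu(\FF)$ is of order $\binom{2n}{n}/2^n$; by Stirling this is $\sim 2^n/\sqrt{\pi n}$, far exceeding the required $\tfrac12(1.058)^n$ for large $n$.

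Take $K$ to be a sufficiently large dilate of the standard $n$-simplex $\conv\{0,e_1,\ldots,e_n\}$, and set $K_0 := \tfrac12(K-K)$, a centrally symmetric convex body. By the equality case of the Rogers--Shephard inequality for simplices, $\vol(K-K) = \binom{2n}{n}\vol(K)$, and hence $\vol(K_0)/\vol(K) = \binom{2n}{n}/2^n$. Define $X := K_0 \cap \Ze^n$ and $\FF := \{K+x \st x \in X\}$.

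Since $X \subseteq K_0$, one has $X - X \subseteq 2K_0 = K - K$. Two translates $K+x_i$ and $K+x_j$ meet iff $x_i - x_j \in K - K$, so every pair of members of $\FF$ intersects and $\nu(\FF) = 1$. For the lower bound on $\tau$, observe that a point $p$ pierces $K+x$ precisely when $x \in p - K$, so $\tau(\FF) \geq \card(X)/M$, where $M := \sup_{p\in\Ren}\card(X \cap (p - K))$.

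With the dilation chosen sufficiently large (it suffices that $\vol(K)^{1/n} = \omega(n)$, e.g.\ dilating the simplex by a factor $n^3$), standard lattice-point counting gives $\card(X) = (1+o(1))\vol(K_0)$ and $M = (1+o(1))\vol(K)$; hence
\[
  \frac{\tau(\FF)}{\nu(\FF)} = \tau(\FF) \;\geq\; \frac{\card(X)}{M} \;=\; (1+o(1))\,\frac{\binom{2n}{n}}{2^n},
\]
which by Stirling is $\sim 2^n/\sqrt{\pi n}$ and far exceeds $\tfrac12(1.058)^n$ for $n$ large enough. The main technical point is controlling the boundary contributions to the lattice-point counts so that $\card(X)/M$ is truly close to $\vol(K_0)/\vol(K)$; dilating the simplex polynomially in $n$ makes the boundary terms of lower order than the bulk volumes, so the volume estimates are valid.
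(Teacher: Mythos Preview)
Your argument is correct and the lattice-point estimates you invoke can indeed be made rigorous with the dilation $R=n^3$: one has $M\le\binom{\lfloor R\rfloor+n}{n}=(1+o(1))\vol(K)$ by counting nonnegative integer solutions of $\sum w_i\le R$, and $\card(K_0\cap\Ze^n)\ge\vol(K_0\ominus[0,1]^n)\ge(1-n/R)^n\vol(K_0)=(1+o(1))\vol(K_0)$ since the inradius of $\Delta-\Delta$ is at least $1/\sqrt{n}$. So the proof goes through.

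However, your route is entirely different from the paper's. The paper constructs $\FF$ from a \emph{strictly antipodal} set $S$: taking $K=\conv(S)$ and $\FF=\{K+s:s\in S\}$, one shows that the translates pairwise touch and that no three share a point, whence $\nu(\FF)=1$ and $\tau(\FF)\ge\tfrac12\card(S)$. The size of $S$ is then supplied by a construction of F\"uredi--Lagarias--Morgan (sharpened by Swanepoel) giving $\card(S)\ge(1.058)^n$. Your construction instead exploits the extremal case of the Rogers--Shephard inequality: for the simplex, $\vol(K_0)/\vol(K)=\binom{2n}{n}/2^n$, and a pigeonhole/volume argument converts this ratio directly into a lower bound on $\tau/\nu$. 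The paper's approach is more geometric and yields the stronger structural fact that no point lies in three members of $\FF$; your approach is more analytic but delivers a dramatically better bound, of order $2^n/\sqrt{\pi n}$ rather than $(1.058)^n$ (and still better than the $3^{n/3}$ obtainable from Talata's construction mentioned in the paper's closing note).
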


Our second approach is to investigate the \emph{VC-dimension} of a family $\FF$
of positive homothets (or translates) of a convex body $K$.
This combinatorial measure of complexity was introduced
by Vapnik and \v{C}ervonenkis \cite{VCru},
and is defined as
\[ \vcdim(\FF) = \sup\ \{\card(X) \st \text{$\FF$ shatters $X$}\} , \]
where a set system~$\FF$ is said to \emph{shatter} a set of points~$X$
if for every subset~$X'\subseteq X$, there exists a set~$F\in\FF$
such that~$X\cap F = X'$.
Note that if there is no upper bound on the sizes of sets shattered by~$\FF$,
then this definition yields~$\vcdim(\FF) = \infty$.

Our main motivation in studying the VC-dimension is
its involvement in upper bounds on transversal numbers
(see the Epsilon Net Theorem of Haussler and Welzl \cite{HW}
and Corollary~10.2.7 of \cite{Mat02})
and related phenomena (see \cite{Mat04}, for example).
We show, however, that~$\vcdim(\FF)$ is bounded 
from above only in dimension two.

\begin{thm}\label{thm:vcdim}
    If~$K\subseteq\Re^2$ is a convex body
    and~$\FF$ is a family of positive homothets of~$K$,
    then~$\vcdim(\FF) \le 3$.
\end{thm}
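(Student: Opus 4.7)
The plan is to argue by contradiction: suppose some four points $p_1, p_2, p_3, p_4 \in \Re^2$ are shattered by homothets in~$\FF$. A Radon-style observation in the plane forces these four points to be in convex position, for if some $p_i$ lay in $\conv\{p_j : j \neq i\}$, every convex set (in particular every homothet) containing the other three would also contain $p_i$, so that three-element subset could not be realized, contradicting shattering. I label the four points cyclically around the resulting convex quadrilateral so that the two diagonals $\conv\{p_1, p_3\}$ and $\conv\{p_2, p_4\}$ meet at an interior point~$q$. I then pick out the two shattering homothets $H_{13}, H_{24} \in \FF$ witnessing the diagonally opposite pairs $\{p_1, p_3\}$ and $\{p_2, p_4\}$; both contain~$q$ by convexity, and neither contains the other.

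The heart of the argument is a planar separation lemma: if $A$ and $B$ are two positive homothets of a convex body in $\Re^2$, then the two differences $A \setminus B$ and $B \setminus A$ lie in opposite closed half-planes of some line. Granting this lemma and applying it to $A = H_{13}$, $B = H_{24}$, the separating line $L$ would put $\{p_1, p_3\}$ on one side of $L$ and $\{p_2, p_4\}$ on the other, so $\conv\{p_1, p_3\}$ and $\conv\{p_2, p_4\}$ would be disjoint convex sets, contradicting their meeting at~$q$.

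My plan for the separation lemma: reduce by an affine change of coordinates to the canonical case $A = K$, $B = \alpha K + w$, dispose of the trivial subcase where one body contains the other, and then show that $\partial A$ and $\partial B$ have at most two ``crossings'' in the plane. The line through these two crossings then separates the two differences, because in each of the half-planes it defines, the two arcs of $\partial A$ and $\partial B$ have no further intersection and therefore, by convexity, one of the two bodies locally contains the other on that side; consistency on the two sides then forces the two differences onto opposite sides of the line.

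The main obstacle is exactly the bound of two boundary crossings. For strictly convex smooth $K$ it follows from a support-function or rolling argument; for a general convex body the boundaries $\partial A$ and $\partial B$ may share whole arcs, so one must either approximate $K$ by smooth strictly convex bodies or give a direct combinatorial argument in terms of supporting half-planes. That planarity is essential here is confirmed by the theorem's failure for $n \geq 3$: for instance the cubes $[0,1]^3$ and $[0.5, 1.5]^3$ in $\Re^3$ have differences whose extreme points force any separating linear functional to vanish identically, so the separation lemma itself breaks down already in dimension three.
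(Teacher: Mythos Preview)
Your argument and the paper's share the same opening --- four points in convex position, cyclically $x_1x_2x_3x_4$, and the two homothets realizing the diagonal pairs $\{x_1,x_3\}$ and $\{x_2,x_4\}$ --- but then diverge. The paper does not use a separation lemma: writing the second homothet as $TK$ with $T(x)=\lambda x+t$, $\lambda\ge1$, it locates the homothety center $p=t/(1-\lambda)$ in one of five regions cut out by the sidelines of the quadrilateral, and for each region produces an explicit convex combination forcing a forbidden containment (for instance, if $p$ lies in one particular region then $x_2\in\conv\{x_1,x_3,T^{-1}x_2\}\subseteq K$, contradicting $x_2\notin K$). The case $\lambda=1$ is handled by treating $p$ as an ideal point. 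This is elementary and fully self-contained. Your route instead packages the geometry into the separation lemma (for positive homothets $A,B$ in the plane, $A\setminus B$ and $B\setminus A$ lie in opposite closed half-planes); granting it, your deduction is correct, and your reduction of the lemma to an ``at most two boundary crossings'' statement, with the chord through the crossings as separator, is the right structure. The lemma is indeed true --- it is essentially the pseudo-disk property of planar homothets of a convex body. But your proposal only \emph{names} the proof of the two-crossings bound (``support-function or rolling argument'' for smooth $K$, approximation for general $K$) without carrying it out, and that bound is exactly where the geometric work sits. So you have isolated a cleaner, reusable lemma at the cost of leaving its proof a sketch, whereas the paper's five-region case analysis is less modular but complete.
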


\begin{ex}\label{ex:vcdim}
    We construct a convex body~$K\subseteq\Re^3$
    and a countable family~$\FF$ of translates of~$K$
    such that~$\vcdim(\FF)=\infty$.
\end{ex}

This example can, of course, be embedded in~$\Re^n$ for~$n>3$ as well.

Example~\ref{ex:vcdim} also settles a conjecture of Gr\"unbaum
on dual VC-dimension (see Section~10.3 of \cite{Mat02} for this notion).
He showed~\cite{Gr75} that
if $\FF$ is a family of positive homothets of a convex body in $\Re^2$,
then $\vcdim(\FF^\ast) \le 3$,
and conjectured (point~(7) on p.~21 of~\cite{Gr75})
the upper bound $\vcdim(\FF^\ast) \le n+1$ for such families in $\Ren$.
(Gr\"unbaum uses a different terminology: instead of dual VC-dimension, 
he writes ``the maximal number of sets in independent families'', where 
``independence'' is \emph{not} as we defined above.)
Naiman and Wynn~\cite{NW93} disproved this conjecture by giving an example
with $\vcdim(\FF^\ast) = \left\lfloor\frac{3n}{2}\right\rfloor$;
our example shows that no upper bound exists,
since $\vcdim(\FF) < 2^{\vcdim(\FF^\ast)+1}$ (\cite{Mat02}, Lemma~10.3.4).

\begin{cor}\label{cor:dualvcdim}
    There is a convex body~$K\subseteq\Re^3$
    and a countable family~$\FF$ of translates of~$K$
    such that~$\vcdim(\FF^\ast)=\infty$.
\end{cor}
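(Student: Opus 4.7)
The plan is to obtain the corollary as an immediate formal consequence of Example~\ref{ex:vcdim} together with the standard inequality relating primal and dual VC-dimension. Specifically, I would take $K\subseteq\Re^3$ and the countable family $\FF$ of translates of $K$ produced in Example~\ref{ex:vcdim}, for which $\vcdim(\FF)=\infty$, and argue that the \emph{same} family witnesses $\vcdim(\FF^\ast)=\infty$.

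The key step is a contrapositive application of Lemma~10.3.4 of \cite{Mat02}, which asserts that for any set system $\FF$ one has $\vcdim(\FF) < 2^{\vcdim(\FF^\ast)+1}$. If $\vcdim(\FF^\ast)$ were finite, this inequality would give a finite upper bound on $\vcdim(\FF)$, contradicting $\vcdim(\FF)=\infty$. Hence $\vcdim(\FF^\ast)=\infty$, and since the constructed $\FF$ is countable, the conclusion of the corollary holds.

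There is essentially no obstacle in this step; all the work sits in Example~\ref{ex:vcdim}, whose construction of $K$ and $\FF$ producing infinite primal VC-dimension does the heavy lifting. The only thing to verify is that the cited lemma applies to our set system as formulated (it does, since it is stated for arbitrary set systems), and that ``countable'' is preserved, which is automatic. No new geometric construction is needed beyond the one already promised in the example.
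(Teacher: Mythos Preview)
Your proposal is correct and matches the paper's own argument essentially verbatim: the paper derives the corollary directly from Example~\ref{ex:vcdim} together with the inequality $\vcdim(\FF) < 2^{\vcdim(\FF^\ast)+1}$ (Lemma~10.3.4 of \cite{Mat02}), exactly as you do. No separate proof is given beyond this one-line observation.
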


The construction of example~\ref{ex:vcdim} shares some principles with
the constructions given in \cite{HM} and in Theorem~2.9 of~\cite{GPW}
to show that certain Helly-type and Hadwiger-type theorems
for line transversals of families of translates of a convex set
in the plane do not generalize to $\Re^3$.
These examples and ours show that, in some sense,
translates of a convex set in $\Re^3$ may form set systems of high complexity. 
They also suggest that finding good bounds for the transversal numbers
of such families is a difficult task.

In Section~\ref{sec:ubound},
we prove Theorem~\ref{thm:ubound} and Proposition~\ref{prop:lbound}.
In Section~\ref{sec:vcdim},
we prove Theorem~\ref{thm:vcdim} and construct Example~\ref{ex:vcdim}.

\section{Transversal and Independence Numbers of Positive Homothets}\label{sec:ubound}

Let $K$ and $L$ be convex bodies in $\Ren$. Let $N(K,L)$ denote the \emph{covering number} of $K$ by $L$; that is, 
the smallest number of translates of $L$ required to cover $K$.
\begin{thm}[Rogers \cite{R57} , Rogers--Zong \cite{RZ}]\label{thm:RZ}
	Let $K,L\subset\Ren$ be convex sets. Then
	\[
	  N(K,L)\leq\frac{\vol(K-L)}{\vol(L)}(\nlogn).
	\]
\end{thm}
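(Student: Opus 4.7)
The plan is to follow the classical lattice covering approach of Rogers, refined by Rogers and Zong. The core input is a \emph{lattice covering lemma}: for every convex body $L\subseteq\Ren$ there exists a lattice $\Lambda$ with $\Lambda+L=\Ren$ and covering density
\[
   \theta(L,\Lambda) := \frac{\vol(L)}{\det\Lambda} \le \nlogn.
\]
This is proved by the mean value method on the space of lattices: one chooses an appropriate invariant probability measure on a suitable family of unimodular lattices, writes the expected uncovered volume (equivalently, the expected uncovered fraction of a fundamental cube) as an integral in terms of the density, and shows that this expectation drops below one as soon as the density exceeds the claimed threshold. Some lattice in the family therefore attains the bound.

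Once such a lattice $\Lambda$ is in hand, a short averaging argument produces a cover of $K$ of the right size. Since $\Lambda+L=\Ren$, every $y\in K$ lies in some translate $L+\lambda$ with $\lambda\in\Lambda$, and such a $\lambda$ must satisfy $\lambda\in K-L$. Hence $\{L+\lambda : \lambda\in\Lambda\cap(K-L)\}$ is a finite cover of $K$, and it remains to bound its cardinality. To that end, fix a fundamental parallelepiped $P$ of $\Lambda$ and average over shifts $t\in P$:
\[
   \int_{P}\bigl|(\Lambda+t)\cap(K-L)\bigr|\,dt
   \;=\; \sum_{\lambda\in\Lambda}\vol\bigl(P\cap(K-L-\lambda)\bigr)
   \;=\; \vol(K-L),
\]
the last equality using that the $\Lambda$-translates of $P$ tile $\Ren$. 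Consequently some shift $t\in P$ gives $|(\Lambda+t)\cap(K-L)|\le\vol(K-L)/\det\Lambda$, and $\Lambda+t$ is still a covering lattice for $L$.

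Putting the pieces together,
\[
   N(K,L) \;\le\; \bigl|(\Lambda+t)\cap(K-L)\bigr|
   \;\le\; \frac{\vol(K-L)}{\det\Lambda}
   \;=\; \frac{\vol(K-L)}{\vol(L)}\,\theta(L,\Lambda)
   \;\le\; \frac{\vol(K-L)}{\vol(L)}(\nlogn),
\]
as required. The main obstacle is the lattice covering lemma itself: obtaining the precise constant $n\log n+\log\log n+5n$ demands a carefully selected probability measure on the space of lattices together with a tight estimate of the expected uncovered volume, which is the technical core of Rogers' original 1957 argument and of Zong's subsequent sharpening. By comparison, the averaging that turns the lemma into a bound on $N(K,L)$ is routine.
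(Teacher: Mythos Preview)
The paper does not prove this theorem at all: it is quoted as a known result of Rogers and of Rogers--Zong and used as a black box in the proof of Theorem~\ref{thm:ubound}. So there is no ``paper's own proof'' to compare against.

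That said, your outline is the standard argument and is correct. Two small remarks. First, your attribution is slightly scrambled: the density bound $\theta(L,\Lambda)\le \nlogn$ is Rogers' 1957 result, while the contribution of Rogers--Zong is precisely the averaging step you describe, turning a lattice covering of $\Ren$ into a bound on $N(K,L)$ via $\vol(K-L)/\det\Lambda$; Zong did not sharpen Rogers' constant. Second, the statement in the paper requires $K$ and $L$ to be convex bodies (compact, with interior) rather than arbitrary convex sets, and your argument implicitly uses this (finiteness of $\vol(K-L)$, positivity of $\vol(L)$, measurability); you might note the hypothesis explicitly.
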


\begin{proof}[Proof of Theorem~\ref{thm:ubound}]

	First, we prove the theorem in the case when $\FF$ consists of translates of $K$ only.
	Let $\{K_1, K_2,\dots,K_\ell\}$ be a maximal set of independent (i.e., pairwise disjoint)
	elements of $\FF$. Clearly, $\ell\leq\nu(\FF)$.
	Let $\FF_1=\{F\in\FF \st F\cap K_1\neq\emptyset\}$, and for $i=2,\dots,\ell$ let
	\[\FF_i=\left\{F\in\FF\setminus\bigcup_{j=1}^{i-1}\FF_j \st F\cap K_i\neq\emptyset\right\}.\]
	We will construct a transversal $T_i$ for each $\FF_i$.

        It is easy to show that, for any set~$S\subseteq\Ren$,
        \[ S-K = \{x\in\Ren\st (K+x)\cap S \ne \emptyset\} \text{ .} \]
        An immediate consequence is that if~$K_i-K \subseteq T_i-K$,
        then~$T_i$ is a transversal of~$\FF_i$.
	By Theorem~\ref{thm:RZ}, for each $i$, 
	there is such a set $T_i$ with 
	\begin{align*}
	  \card(T_i) &\le \frac{\vol(K_i-K+K)}{\vol(-K)}(\nlogn) \\
		&= \frac{\vol(2K-K)}{\vol(K)}(\nlogn) \\
		&\le \begin{cases}
		     3^n(\nlogn) & \text{if $K=-K$,} \\
		     2^n\binom{2n}{n}(\nlogn) & \text{otherwise.}
                     \end{cases}
	\end{align*}
	The last inequality for the non-symmetric case follows from the Rogers--Shephard inequality \cite{RS}. 
	Hence, $T=\mathop\cup\limits_{i=1}^\ell T_i$ is a transversal of $\FF$ 
	of cardinality bounded from above as stated in the theorem.
	
	The proof of the case when $\FF$ contains finitely many positive homothets of $K$
	follows from an argument given in \cite{KNPS},
        which we repeat here.
	First, assume that $\inf\ \{\lambda \st \lambda K+x\in\FF\} >0$.
	Let $\varepsilon$ be a positive number, to be specified later.
	We say that $\lambda K+x$ is a \emph{small} member of
        a subset $\mathcal A\subseteq\FF$ if 
	\[ \lambda
            < (1+\varepsilon)\inf\ \{\mu \st \mu K+x\in\mathcal A\}. \]
	Let $F_1$ be a small element of $\FF$, and let
	$\FF_1=\left\{F\in\FF \st F\cap F_1\neq\emptyset\right\}.$
	Next, for each $i=2,3,\dots,\ell$ inductively, let $F_i$ be 
	a small element in $\FF\setminus\mathop\cup\limits_{j=1}^{i-1}\FF_j$, and let
	\[\FF_i=\left\{F\in\FF\setminus\bigcup_{j=1}^{i-1}\FF_j \st F\cap F_i\neq\emptyset\right\}.\]
	Let $\lambda_i=\inf\ \{\lambda \st \lambda K+x\in\FF_i\}$.
	By assumption, $\lambda_i>0$. Our inductive procedure of defining $F_i, \FF_i$ and $\lambda_i$
	will terminate with $\ell\le\nu(\FF)$.

	Now, for each $F\in\FF_i$, choose a point $z$ in $F\cap F_i$, and shrink $F$ with center $z$
	to obtain a translate of $\lambda_i K$. 
	The shrunk copy of $F$ is clearly contained in $F$. Let $\FF_i'$ be the family of these shrunk copies.
	Now, $\FF_i'$ contains only translates of $\lambda_i K$,
        any transversal of $\FF_i'$ is a transversal of $\FF_i$,
        and each member of $\FF_i'$ intersects $F_i$.
        Thus if~$F_i-\lambda_i K \subseteq T_i-\lambda_i K$,
        then~$T_i$ is a transversal of~$\FF_i$.
        Theorem~\ref{thm:RZ} yields such a set $T_i$ with cardinality
	\[ \card(T_i) \le \frac{\vol((1+\varepsilon)\lambda_i K- \lambda_i K + \lambda_i K)}{\vol(-\lambda_i K)}(\nlogn). \]
	Since $\card(T_i)$ is an integer,
        choosing a sufficiently small $\varepsilon$ provides the right bound.

	Finally, we sketch the additions necessary to handle 
	the case when\\ $\inf\ \{\lambda \st \lambda K+x\in\FF\} =0$,
        a case not considered in~\cite{KNPS}.
        Let $(\delta_m)_{m=1}^\infty$ be a sequence of positive real
        numbers with $\delta_m\downarrow 0$.
        For every $m\in\Ze^+$ we define
	$\FF^m=\{\lambda K +x \in\FF \st \lambda>\delta_m\}$. Using the previous proof, we obtain a
	transversal $T^m = \{t^m_1,\dotsc,t^m_k\}$ of $\FF^m$ for each $m$,
        where $k$ is the desired bound.
Now, choose some $G_1\in\FF$.
By the pigeonhole principle,
there is an $i\in\{1,\dotsc,k\}$
with $t^m_i\in G_1$ for infinitely many $m$;
assume $i=1$.
Passing to a subsequence of $(T^m)_{m=1}^\infty$,
we may further assume that $t^m_1\to t_1\in G_1$.
If $\{t_1\}$ is not a transversal of $\FF$,
choose $G_2\in\FF$ with $t_1\notin G_2$;
passing to a further subsequence of $(T^m)_{m=1}^\infty$,
we may assume that $t^m_2\to t_2 \in G_2$.
If $\{t_1,t_2\}$ is not a transversal of $\FF$,
continue in this manner,
obtaining eventually a transversal of $\FF$.
\end{proof}

For the proof of Proposition~\ref{prop:lbound},
we need the following definition.
A set $S\subseteq\Ren$ is called \emph{strictly antipodal} if,
for any two points $x_1$ and $x_2$ in $S$,
there exists a hyperplane~$H$ through~$o$
such that~$H+x_1$ and~$H+x_2$ support~$S$
and~$(H+x_1)\cap S = \{x_1\}$ and~$(H+x_2)\cap S = \{x_2\}$. 
For more on this notion, see~\cite{Gr63}.

\begin{proof}[Proof of Proposition~\ref{prop:lbound}]
	First, we show that if $S$ is a strictly antipodal set
	then $\FF=\{K+s \st s\in S\}$, where $K=\conv(S)$, is a family of 
	pairwise touching translates of $K$,
        and no three members of $\FF$ have a point in common.
	We may assume that $o\in K$.
	Let $x_1,x_2$ be two distinct points in $S$.
        Clearly, $x_1+x_2\in (K+x_1)\cap(K+x_2)$.
        On the other hand, if $H$ is a hyperplane
        as in the definition of strict antipodality,
        then $H'=H+x_1+x_2$ separates $K+x_1$ and $K+x_2$.
        Moreover, $(K+x_1)\cap H' = (K+x_2)\cap H' = \{x_1+x_2\}$.
	So, $K+x_1$ and $K+x_2$ touch each other.
        We need to show that for any $x_3\in S\setminus\{x_1,x_2\}$, 
	we have that $K+x_3$ does not contain $x_1+x_2$. Suppose it does.
        Then $x_1+x_2$ is a common point of
	$K+x_1$ and $K+x_3$, hence, by the previous argument,
        $x_1+x_2=x_1+x_3$, so $x_2=x_3$, a contradiction.

	On the other hand, F\"uredi, Lagarias and Morgan 
	(Theorem 2.4. in \cite{FLM})
        give a construction, for sufficiently large~$n$,
        of a symmetric strictly convex body $K$ and a finite set
	$S$ in $\Ren$ with the property that 
	any two translates of $K$ in the family 
	$\{s+K : s\in S\}$ touch each other, moreover 
	$\card(S) \geq (1.02)^n$. It follows that 
	$S$ is a strictly antipodal set.
	Later, Swanepoel observerd (Theorem 2 in Section 2.2, \cite{Sw04}) 
	that a better bound, $\card(S) \geq (1.058)^n$ 
	follows from the proof in \cite{FLM}.
        Thus, for the resulting $\FF$ we have $\nu(\FF)=1$ 
	and $\tau(\FF)\geq \frac{1}{2}\card(\FF)=\frac{1}{2}(1.058)^n$.
\end{proof}

\section{VC-Dimension of Positive Homothets}\label{sec:vcdim}

\begin{proof}[Proof of Theorem~\ref{thm:vcdim}]
Let~$\FF$ be a family of positive homothets of a convex body~$K\subseteq\Re^2$.
Suppose, for contradiction, that~$\FF$ shatters some set of four points,
say,~$X = \{x_1,x_2,x_3,x_4\}$.

Case~1: One of the points of~$X$ is in the convex hull of the other three,
say,~$x_1\in\conv(\{x_2,x_3,x_4\})$.
By hypothesis, there is an~$F\in\FF$
such that~$X\cap F = \{x_2,x_3,x_4\}$.
But since~$F$ is convex, it follows that~$x_1\in F$,
which is a contradiction.

\begin{figure}[t]
    \begin{minipage}{0.49\textwidth}
        \begin{center}\input{quad-fig.tex}\end{center}
        \par\caption{Theorem~\ref{thm:vcdim}, Case~2.}\label{fig:quad}
    \end{minipage}
    \hfill
    \begin{minipage}{0.49\textwidth}
        \begin{center}\input{conv-fig.tex}\end{center}
        \par\caption{Why~$p\notin A$.}\label{fig:conv}
    \end{minipage}
\end{figure}

Case~2: The points of~$X$ are in convex position,
forming the vertices of a convex quadrilateral in,
say, the order~$x_1x_2x_3x_4$.
(See Figure~\ref{fig:quad}.)
Without loss of generality,~$X\cap K = \{x_1,x_3\}$
and~$X\cap TK = \{x_2,x_4\}$,
where~$T\colon\Re^2\to\Re^2, Tx = \lambda x + t$
is a homothety with ratio~$\lambda\ge 1$.

First suppose~$\lambda > 1$.  Let
\[ p = \frac{1}{1-\lambda} t \]
be the centre of the homothety~$T$.
If~$p$ is in the (closed) region~$A$ shown in Figure~\ref{fig:quad},
then~$x_2\in\conv(\{x_1,x_3,p\})$.
On the other hand, $T^{-1}x_2$ is a convex combination of~$p$ and~$x_2$;
thus $x_2\in\conv(\{x_1,x_3,T^{-1}x_2\})$.
(See Figure~\ref{fig:conv}.)
But~$\{x_1,x_3,T^{-1}x_2\} \subseteq K$,
so by convexity,~$x_2\in K$, a contradiction.

Similarly, if~$p\in B$ then~$x_4\in\conv(\{x_1,x_3,T^{-1}x_4\})\subseteq K$;
if~$p\in C\cup D$ then~$x_3\in\conv(\{x_2,x_4,Tx_3\})\subseteq TK$;
and if~$p\in D\cup E$ then~$x_1\in\conv(\{x_2,x_4,Tx_1\})\subseteq TK$.
In all cases we obtain a contradiction.

The case~$\lambda = 1$, when~$T$ is a translation,
succumbs to essentially the same argument, with~$p$ an ideal point
corresponding to the direction of the translation.
We omit the details.
\end{proof}

\begin{figure}
    \begin{center}\input{paraboloid-fig}\end{center}
    \par\caption{The paraboloid $z=x^2+y^2$ and a few sections of it.}
    \label{fig:paraboloid}
\end{figure}

\begin{proof}[Construction of Example \ref{ex:vcdim}]
To illustrate the ideas of the construction,
we first sketch how to construct, for any~$M\in\N$,
a convex body~$K$ whose translates shatter a set of~$M$ points.

The sections of the paraboloid~$z = x^2 + y^2$
by planes parallel to the $yz$-plane
are all translates of the same parabola.
(See Figure~\ref{fig:paraboloid}.)
Choose some~$2^M$ of these sections
and some set~$X$ of~$M$ points on one of them.
Each section contains a translated copy of~$X$;
assign a subset to each section, take that subset of its copy of~$X$,
and let~$K$ be the convex hull of the points in these subsets of copies.
The translates of~$K$ then shatter~$X$,
since an appropriate translation will superimpose
the section corresponding to any desired subset
on the section containing~$X$.

Now, we present Example~\ref{ex:vcdim}.
Let~$\mathcal E$ be the family of all finite subsets of~$\N$,
and let~$E\colon\N\to\mathcal E$ be a bijection.
Set
\[ A = \{(m,n)\in\N^2\st m\in E(n)\} \text{ .} \]
For~$m,n\in\N$,
let~$u_m = (\frac1m,0,\frac1{m^2})$
and~$v_n = (0,\frac1n,\frac1{n^2})$,
and define
\[ p\colon\N^2\to\Re^3 \text{ ,\quad}
    p(m,n) = u_m + v_n \text{ .} \]
Let~$K = \conv(\cl(p(A))$
and~$\FF = \{K - v_n \st n\in\N\}$.
We claim that~$\vcdim(\FF) = \infty$.

Let~$P\subseteq\Re^3$ be the paraboloid with equation~$z = x^2 + y^2$.
Since~$P$ is the boundary of a strictly convex set,
$P\cap\conv(S) = S$ for any~$S\subseteq P$.
Since~$p(\N^2)$ is a discrete set,
$p(\N^2)\cap\cl(S) = S$ for any~$S\subseteq p(\N^2)$.
So if~$T \subseteq p(\N^2)$, then
\[ T\cap K = T\cap p(\N^2) \cap P \cap K
    = T\cap p(\N^2) \cap \cl(p(A)) = T\cap p(A) \text{ .} \]
Now, let~$M\in\N$,
$X = \{u_1,\dotsc,u_M\}$, and $X'\subseteq X$.
Let~$n\in\N$ be such that~$X' = \{u_m \st m\in E(n)\}$.
Then
\[ (X + v_n)\cap K = (X + v_n)\cap p(A) = X' + v_n \text{ ,} \]
that is,~$X\cap (K - v_n) = X'$.
Thus~$\FF$ shatters~$X$, so~$\vcdim(\FF) \ge M$.
\end{proof}

\section*{Acknowledgements}
The authors are grateful to Nicole Tomczak-Jaegermann
and Alexander Litvak for their support and encouragement,
and we thank the University of Alberta.
We thank Leonard Schulman who, upon learning of our example~\ref{ex:vcdim},
brought the question asked by Gr\"unbaum
in \cite{Gr75} to our attention and thus put our result in context.
The first named author holds a Postdoctoral Fellowship 
of the Pacific Institute for the Mathematical Sciences
at the University of Alberta,
and the second named author was supported by a Canada Graduate Scholarship
of the Natural Sciences and Engineering Research Council of Canada.
We thank them as well.

\section{A note}
After the publication of the paper, Konrad Swanepoel brought the following to our attention:
In Lemma 9.11.2 of \cite{Bor04} (proved by I. Talata in \cite{Talata})
an explicit construction of an $o$-symmetric strictly convex smooth body is given 
with $\sqrt[3]{3^n}/3$ pairwise touching translates.
That changes the bound in Proposition~\ref{prop:lbound} to
$\tau(\FF)\geq\frac{\sqrt[3]{3^n}}{6}\nu(\FF)$.

\bibliographystyle{plain}
\bibliography{biblio}
\end{document}